\newcommand{\myhref}[1]{%
	\ifboolexpr{%
		test {\ifhyperref}
		and
		not test {\iftoggle{bbx:eprint}}
		and
		not test {\iftoggle{bbx:url}}
	}
	{\href{\doiorurl}{#1}}
	{#1}%
}
\crefname{subsection}{\S\!\!}{subsections}
\crefname{subsubsection}{\S\!\!}{subsubsections}
\newtheorem*{theorem*}{Theorem}
\newtheorem*{proposition*}{Proposition}
\newtheorem{lemma}[subsubsection]{Lemma}
\newtheorem*{lemma*}{Lemma}
\newtheorem*{corollary*}{Corollary}
\theoremstyle{definition}
\newtheorem*{definition*}{Definition}
\newtheorem*{remark*}{Remark}
\newtheorem*{example*}{Example}
\newtheorem*{construction*}{Construction}
\newtheorem*{convention*}{Convention}
\newtheorem*{terminology*}{Terminology}
\newtheorem*{notation*}{Notation}
\newtheorem*{question*}{Question}
\DeclareMathOperator{\face}{d}
\DeclareMathOperator{\dege}{s}
\DeclareMathOperator{\bd}{\partial}
\newcommand{\ot}{\otimes}
\DeclareMathOperator{\EZ}{EZ}
\DeclareMathOperator{\AW}{AW}
\newcommand{\N}{\mathbb{N}}
\newcommand{\Z}{\mathbb{Z}}
\newcommand{\sym}{\mathbb{S}}
\newcommand{\cyc}{\mathbb{C}}
\newcommand{\Ftwo}{{\mathbb{F}_2}}
\newcommand{\Fp}{{\mathbb{F}_p}}
\newcommand{\Set}{\mathsf{Set}}
\newcommand{\Ch}{\mathsf{Ch}}
\newcommand{\sSet}{\mathsf{sSet}}
\DeclareMathOperator{\Sq}{Sq}
\DeclareMathOperator{\chains}{N}
\DeclareMathOperator{\cochains}{N^{\vee}}
\DeclarePairedDelimiter\bars{\lvert}{\rvert}
\DeclarePairedDelimiter\set{\{}{\}}
\DeclarePairedDelimiter\floor{\lfloor}{\rfloor}
\newcommand{\id}{\mathsf{id}}
\newcommand{\Hom}{\mathrm{Hom}}
\newcommand{\End}{\mathrm{End}}
\newcommand{\xla}[1]{\xleftarrow{#1}}
\newcommand{\xra}[1]{\xrightarrow{#1}}
\newcommand{\defeq}{\stackrel{\mathrm{def}}{=}}
\newcommand{\rD}{\mathrm{D}}
\newcommand{\rE}{\mathrm{E}}
\newcommand{\rH}{\mathrm{H}}
\newcommand{\rP}{\mathrm{P}}
\newcommand{\cC}{\mathcal{C}}
\newcommand{\cE}{\mathcal{E}}
\newcommand{\cO}{\mathcal{O}}
\newcommand{\cP}{\mathcal{P}}
\newcommand{\cR}{\mathcal{R}}
\newcommand{\cS}{\mathcal{S}}
\newcommand{\cW}{\mathcal{W}}
\newcommand{\bcirc}{\bm{\circ}}
\DeclareMathOperator{\EE}{E}
\DeclareMathOperator{\Shi}{Shi}
\newcommand{\BE}{\cE}
\newcommand{\cp}{\smallsmile}
\newcommand{\pdfC}{\texorpdfstring{$\cyc$}{C}}
\DeclareMathOperator{\card}{card}                   
\title[An effective proof of the Cartan formula: Odd primes]{An effective proof of the Cartan formula:\\Odd primes}
\author[Medina-Mardones]{Anibal~M.~Medina-Mardones}
\address{LAGA, Universit\'e Sorbonne Paris Nord.
	 Institut Galilée 99, avenue Jean-Baptiste Clément, 93430, Villetaneuse, France.}
\email{\href{mailto:medina-mardones@math.univ-paris13.fr}{medina-mardones@math.univ-paris13.fr}}
\author[Cantero-Mor\'an]{Federico~Cantero-Mor\'an}
\address{Departamento de Matem\'aticas, Universidad Aut\'onoma de
	Madrid \& ICMAT. Calle Francisco Tom\'as y Valiente, 7. 28049, Madrid, Spain.}
\email{\href{mailto:federico.cantero@uam.es}{federico.cantero@uam.es}}
\date{\today}
\subjclass[2020]{55S10; 55S05; 55S12}
\keywords{Cohomology algebra, Cup product, Cohomology operations, May--Steenrod operations, Cup-$(p,i)$ products, Cartan formula, Operads}
\begin{document}

\begin{abstract}
	The Cartan formula relates the cup product and the action of the Steenrod algebra on mod~$p$ cohomology.
	For any pair of mod~$p$ cocycles in a simplicial set, where $p$ is an odd prime, we effectively construct a natural coboundary descending to this formula in cohomology.
\end{abstract}
	\maketitle

\section{Introduction} \label{s:introduction}

Steenrod introduced in \cite{steenrod1947products} his celebrated squares
\begin{equation*}
	\Sq^k \colon \rH^\vee(X; \Ftwo) \to \rH^\vee(X; \Ftwo)
\end{equation*}
on the mod 2 cohomology of simplicial sets via an effective construction of cup-$i$ products on their integral cochains
\[
\cp_i \colon \cochains(X) \ot \cochains(X) \to \cochains(X),
\]
which are degree $i$ products satisfying:
\[
\Sq^{i-\bars{\alpha}}[\alpha] \defeq [\alpha \cp_i \alpha],
\]
for $\alpha$ a mod $2$ cocycle and $[\alpha]$ its represented cohomology class.
The cup-$0$ product induces the cup product in cohomology
\[
[\alpha] \cdot [\beta] \defeq [\alpha \cp_0 \beta],
\]
whose interaction with the Steenrod squares is given by the Cartan formula:
\[
\Sq^k\!\big([\alpha] \cdot [\beta]\big) = \sum_{i+j=k} \Sq^i [\alpha] \cdot \Sq^j[\beta].
\]
This relation in cohomology implies the existence, for any pair of mod $2$ cocycles, of a coboundary $\delta\zeta_i(\alpha,\beta)$ satisfying:
\begin{equation}\label{e:lift cartan even}
	\delta\zeta_i(\alpha, \beta) =
	(\alpha \smallsmile_0 \beta) \cp_i (\alpha \cp_0 \beta) \ +
	\sum_{i=j+k} (\alpha \smallsmile_j \alpha) \cp_0 (\beta \cp_k \beta).
\end{equation}

Inspired by the application of cup-$i$ products in the theory of topological phases (refer to \cite{kapustin2015cobordism, gaiotto2016spin, kapustin2017fermionic} for examples), Kapustin inquired about the construction of natural cochains $\zeta_i(\alpha, \beta)$ as in \eqref{e:lift cartan even}.
This was presented in \cite{medina2020cartan} and later used in \cite{barkeshli2021classification} to classify fermionic $(2+1)$-dimensional invertible phases.

Not long after Steenrod's construction was introduced, a non-effective perspective based on group homology was developed, which allowed the definition not only of Steenrod squares but also of Steenrod operations
\begin{align*}
	P^s \colon& \rH^\vee(X; \Fp) \to \rH^\vee(X; \Fp), \\
	\beta P^s \colon& \rH^\vee(X; \Fp) \to \rH^\vee(X; \Fp),
\end{align*}
for odd primes.
The interaction of these with the cup product are also controlled by Cartan formulas:
\begin{align*}
	\rP_s\big([a][b]\big) =&
	\sum_{i+j=s} \rP_i[a] \, \rP_j[b], \\
	\beta\rP_s\big([a][b]\big) =&
	\sum_{i+j=s} \beta\rP_{i+1}[a] \, \rP_j[b] \ +\ (-1)^{\bars{a}} \rP_i[a] \, \beta\rP_{j+1}[b].
\end{align*}

By extending the concept of cup-$i$ products to other primes via the introduction of cup-$(p,i)$ products, an effective construction of Steenrod operations was introduced in \cite{medina2021may_st} and implemented in the specialized computer algebra system \texttt{ComCH} \cite{medina2021comch}.
The goal of this paper is to utilize these cup-$(p,i)$ products to lift the Cartan formulas to the cochain level, and to effectively construct natural coboundaries that generalize $\zeta_i(a,b)$ in \cref{e:lift cartan even} for odd primes.
Our method for constructing these odd Cartan coboundaries is solely based on the combinatorial structure of simplices, which enables a computer to carry out the process locally, focusing on individual simplices sequentially.
An actual implementation is left to future work.

\subsection*{Outline}

In \cref{s:preliminaries} we recall the Eilenberg--Zilber contraction and introduce the basic reordering, a shuffle permutation in the symmetric group $\sym_{2r}$ for any $r$.
We present in \cref{s:may_st} a summary of \cite{medina2021may_st}, which makes explicit the definitions of Steenrod operations \cite{steenrod1953cyclic} from the general viewpoint of May \cite{may1970general}.
\cref{s:cartan} is devoted to introducing the notions of Cartan relator and Cartan coboundary, whereas \cref{s:effective} to the effective construction of these for the cochains of spaces.
We present in \cref{s:postponed} a relatively long proof postponed from the previous section.

\subsection*{Acknowledgment}

We would like to thank Greg Brumfiel, John Morgan, Andy Putmam, and Dennis Sullivan for their insights, question, and comments related to this project.
A.M-M. acknowledges the support and excellent working conditions of the Max Planck Institute for Mathematics in Bonn.
F.C. was supported by grants SI3/PJI/2021-00505 from Comunidad de Madrid and PID2019-108936GB-C21 from the Spanish government.

\section{Preliminaries}\label{s:preliminaries}

Throughout this work $p$ will denote an odd prime and $\Fp$ the field with $p$ elements.
We will assume the ground ring to be $\Z$ unless stated otherwise.
We use homological grading, with the linear dual of an element in degree $d$ placed in degree $-d$.

\subsection{Eilenberg--Zilber contraction}

The functor of (normalized) chains $\chains \colon \sSet \to \Ch$ from simplicial sets to chain complexes does not preserve products, but there are natural chain maps
\[
\begin{tikzcd}
	\chains(X \times Y) \arrow[r,shift left=2pt,"\AW"] &
	\chains(X) \ot \chains(Y) \arrow[l,shift left=2pt,"\EZ"]
\end{tikzcd}
\]
and a natural linear map
\[
\Shi \colon \chains(X \times X) \to \chains(X \times X)
\]
satisfying
\[
\EZ \circ \AW = \id, \qquad
\AW \circ \EZ = \id + \bd \circ \Shi + \Shi \circ \bd.
\]
Closed form formulas describing these maps can be found for example in \cite[56]{real2000homological}.

Let $\rD \colon X \to X \times X$ be the diagonal map of simplicial sets.
We denote the compositions $\AW \circ \chains\rD$ by $\Delta_{\AW}$.

\subsection{The basic reordering}\label{ss:reordering}

For any $r \in \N$, the element $\tau_r \in \sym_{2r}$ is the shuffle permutation mapping the first and second ``decks'' to odd and even integers respectively.
Explicitly, for $\ell \in \{1,\dots,2r\}$ we have
\begin{equation*}
	\tau_r(\ell) =
	\begin{cases}
		2\ell-1 & \ell \leq r, \\
		2(\ell-r) & \ell > r.
	\end{cases}
\end{equation*}
When $r$ is clear from the context we will omit it from the notation.
Please notice that in a graded module we have
\[
\tau(a^{\ot r} \ot b^{\ot r}) =
(-1)^{\frac{(r-1)r}{2}\bars{a}\bars{b}} \, (a \ot b)^{\ot r}.
\]

\section{May--Steenrod structures}\label{s:may_st}

We now present a summary of \cite{medina2021may_st}, which makes explicit the definitions of \cite{steenrod1953cyclic} from the general viewpoint of \cite{may1970general}.
We assume familiarity with the notion of operad as presented for example in \cite{may1997operads} or \cite{loday2012operads}.

\subsection{\pdfC-modules}

For $r > 0$, we denote by $\cyc_r$ the cyclic group of order $r$ thought of as the subgroup of the symmetric group $\sym_r$ generated by the cycle permutation $\rho = (1,2,\dots,r)$.
When convenient, we will simplify our notation using the bijection
\[
\begin{tikzcd}[column sep=small,row sep=-5]
	\{0,\dots,r-1\} \rar["\cong\ "] & \cyc_r \\
	\qquad i \rar[maps to] & \rho^i.
\end{tikzcd}
\]
A \textit{$\cyc$-module} $\cP$ is a set $\{\cP(r)\}_{r>0}$ with $\cP(r)$ a chain complex with a right action of $\cyc_r$.
A \textit{$\cyc$-equivariant map} $\phi \colon \cP_1 \to \cP_2$ is a set $\set{\phi(r) \colon \cP_1(r) \to \cP_2(r)}_{r>0}$ with $\phi(r)$ a $\cyc_r$-equivariant map.
We say $f$ is a chain map or a quasi-isomorphism if each $\phi(r)$ is.
A similar convention is applied for chain homotopies.
When convenient we will abuse notation and denote $\phi(r)$ simply as $\phi$.
There are evident forgetful functors to the category of $\cyc$-modules from those of $\sym$-modules and operads, which we will use without further notice.

\subsection{Minimal cyclic resolution}

By $\cW(r)$ we denote the minimal free resolution of $\Z$ by $\Z[\cyc_r]$-modules
\begin{equation}\label{eq: minimal resolution}
	\Z[\cyc_r]\{e_0\} \xla{T} \Z[\cyc_r]\{e_1\} \xla{N} \Z[\cyc_r]\{e_2\} \xla{T} \cdots
\end{equation}
where
\begin{equation} \label{eq: T and R definition}
	\begin{split}
		T &= \rho - 1, \\
		N &= 1 + \rho + \cdots + \rho^{r-1}.
	\end{split}
\end{equation}
It is equipped with the structure of a $\cyc_r$-equivariant coalgebra defined by:
\begin{align*}
	\varepsilon(e_0) &= 1, \\
	\Delta(e_{2i}) &=
	\sum_{i=j+k} e_{2j} \ot e_{2k} \ + \! \sum_{i-1=j+k} \ \sum_{0 \leq s < t < r} \rho^s e_{2j+1} \ot \rho^t e_{2k+1}, \\
	\Delta(e_{2i+1}) &=
	\sum_{i=j+k} e_{2j} \ot e_{2k+1} \ +\ e_{2j+1} \ot \rho e_{2k},
\end{align*}
where $\Z$ is acted on trivially and $\cW(r) \ot \cW(r)$ diagonally.
We denote by $\cW$ the $\cyc$-module defined by $\set{\cW(r)}_{r > 0}$ which is a coalgebra in the category of $\cyc$-modules.

\subsection{May--Steenrod structures}

Recall that an $E_\infty$-algebra structure on a chain complex $A$ is an operad morphism to $\End(A) = \{\Hom(A^{\ot r}, A)\}_{r>0}$ from an operad $\cR$ such that, for each $r > 0$, the chain complex $\cR(r)$ is contractible and its $\sym_r$-action is free.

A \textit{May--Steenrod structure} on an chain complex $A$ is an $E_\infty$-structure $\phi \colon \cR \to \End(A)$ on $A$ together with a $\cyc$-equivariant quasi-isomorphism $\iota \colon \cW \to \cR$.
We write $\psi$ for $\phi \circ \iota$ and abusively use it to denote the May--Steenrod structure.
We refer to a chain complex with a May--Steenrod structure as a \textit{May--Steenrod complex}.

A May--Steenrod complex $(A,\psi)$ has a preferred product $\psi(2)(e_0) \colon A \ot A \to A$ which we denote for elements $a, b \in A$ simply as $a \cp b$.
Generalizing this, we refer to $\psi(r)(e_i) \colon A^{\ot r} \to A$ as the \textit{cup-$(r,i)$ product}, denoting it $\psi_i^r$ or simply $\psi_i$ when $r$ is clear from the context.

\subsection{Homology operations}

Let $(A,\psi)$ be a May--Steenrod complex.
After extending scalars to $\Fp$ the assignment sending $a$ to $\psi_i(a^{\ot p})$ sends cycles to cycles and induces a linear map in homology.
We denote its induced map on the mod $p$ homology of $A$ by
\begin{equation}\label{e:D_i^p}
	\rD_i^p \colon \rH_n(A; \Fp) \to \rH_{np+i}(A; \Fp).
\end{equation}
We will emphasize these homology operations as opposed to the operations $P_s$ and $\beta P_s$ that we now define.
The map $\rD_i^p$ in \eqref{e:D_i^p} is $0$ unless $n$ is even (resp. odd) and $i+\varepsilon$ is an even (resp. odd) multiple of $(p-1)$ for some $\varepsilon \in \{0,1\}$.
Please consult \cite[Proposition 2.3. (iv)]{may1970general} for a proof.
With this in mind, one defines the \textit{Steenrod operations}
\begin{align*}
	P_s \colon& H_\bullet(A; \Fp) \to H_{\bullet + 2s(p-1)}(A; \Fp), \\
	\beta P_s \colon& H_\bullet(A; \Fp) \to H_{\bullet + 2s(p-1) - 1}(A; \Fp),
\end{align*}
by sending the class represented by a cycle $a \in A \ot \Fp$ of degree $n$ to the classes represented respectively for $\varepsilon \in \{0,1\}$ by
\begin{equation*}
	(-1)^s \nu(n) \, \rD^p_{(2s-n)(p-1)-\varepsilon}[a]
\end{equation*}
where $\nu(n) = (-1)^{n(n-1)m/2}(m!)^n$ and $m = \floor{p/2} = (p-1)/2$.
The constant $\nu(n)$ and notation $\beta P_s$ are motivated respectively by the unstable $\rD_{n(p-1)}^p[\alpha] = \nu(n)[\alpha]$ and Bockstein relations $\beta P_s = \beta \circ P_s$ respectively.
The first holding in the cohomology of spaces and the second in general.

\section{The Cartan formula}\label{s:cartan}

\subsection{Cartan relators}

Given an $\sym$-module $\cS$ we denote by $\cS^\wedge$ the $\cyc$-module $\cS^\wedge(r) = \cS(2r)$ with
action defined by
\begin{equation}\label{eq:cyclic to 2-symetric}
	\cyc_r \xra{\rD}
	\cyc_r \times \cyc_r \hookrightarrow
	\sym_r \times \sym_r \xra{\mathrm{split}}
	\sym_{2r}.
\end{equation}
For a May--Steenrod complex $(A,\psi)$ we define two $\cyc$-equivariant chain maps
\[
F_\psi,\, G_\psi \colon \cW \to \End(A)^\wedge
\]
as follows.
Let us denote the operad $\End(A)$ by $\cO$.
Then,
\begin{align*}
	F_\psi \colon& \cW(r) \xra{\psi(r)} \cO(r) \xra{\id\, \ot \psi_0^{\ot r}}
	\cO(r) \ot \cO(2)^{\ot r} \xra{\bcirc_{\cO}}
	\cO(2r), \\
	G_\psi \colon& \cW(r) \xra{\Delta}
	\cW(r)^{\ot 2} \xra{\psi^{\ot 2}}
	\cO(r)^{\ot 2} \xra{\psi_0 \ot\, \id}
	\cO(2) \ot \cO(r)^{\ot 2} \xra{\bcirc_{\cO}}
	\cO(2r).
\end{align*}

\begin{definition*}
	A \textit{Cartan relator} for a May--Steenrod complex $(A, \psi)$ is a $\cyc$-equivariant chain homotopy between $\tau F_\psi$ and $G_\psi$ where $\tau$ is the canonical reordering of \cref{ss:reordering}.
\end{definition*}

\subsection{Cartan boundary constructions}\label{ss:cartan_coboundary}

As we will see, the existence of a Cartan relator implies the Cartan formulas:
\begin{align*}
	\rP_s\big([a][b]\big) =&
	\sum_{i+j=s} \rP_i[a] \, \rP_j[b], \\
	\beta\rP_s\big([a][b]\big) =&
	\sum_{i+j=s} \beta\rP_{i+1}[a] \, \rP_j[b] \ +\ (-1)^{\bars{a}} \rP_i[a] \, \beta\rP_{j+1}[b],
\end{align*}
holding for each integer $s$ and mod $p$ cycles $a$ and $b$.
Please observe that the Cartan formulas are equivalent to the following equations:
\begin{align*}
	0 &= (-1)^{\floor{p/2}\bars{a}\bars{b}} \, \rD_{2i}\big([a][b]\big) \,-\!
	\sum_{i=j+k} \rD_{2j}[a] \, \rD_{2k}[b], \\
	0 &= (-1)^{\floor{p/2}\bars{a}\bars{b}} \, \rD_{2i+1}\big([a][b]\big) \,-\!
	\sum_{i=j+k} \rD_{2j+1}[a] \, \rD_{2k}[b] \ +\ (-1)^{\bars{a}}\rD_{2j}[a] \, \rD_{2k+1}[b],
\end{align*}
ranging over $i \in \N$.
A lift of the right hand side of these to the chain level is given by the following formulas:
\begin{align*}
	&C_\psi^p(2i)(a,b) \defeq (-1)^{\floor{p/2}\bars{a}\bars{b}} \psi_{2i}\big((a \cp b)^{\ot p}\big) \ -
	\sum_{i=j+k}\big(\psi_{2j}(a^{\ot p})\big) \cp \big(\psi_{2k}(b^{\ot p})\big), \\
	&C_\psi^p(2i+1)(a,b) \defeq (-1)^{\floor{p/2}\bars{a}\bars{b}} \psi_{2i+1}\big((a \cp b)^{\ot p}\big) \\
	&\quad -\sum_{i=j+k} \big(\psi_{2j+1}(a^{\ot p})\big) \cp \big(\psi_{2k}(b^{\ot p})\big)\ +\
	(-1)^{\bars{a}}\big(\psi_{2j}(a^{\ot p})\big) \cp \big(\psi_{2k+1}(b^{\ot p})\big).
\end{align*}

\begin{definition*}
	Let $(A,\psi)$ be a May--Steenrod complex.
	A \textit{Cartan $i$-boundary construction} is a map $\zeta_i$ from pairs of mod~$p$ cycles to $A$, natural with respect to $E_\infty$-algebra chain maps, such that
	\[
	\bd \zeta_i(a,b) = C_\psi^p(i)(a, b)
	\]
	in $A \ot \Fp$.
\end{definition*}

\begin{theorem*}
	If $H_\psi$ is a Cartan relator for $(A, \psi)$ then
	\[
	\zeta_i(a,b) \defeq H_\psi^p(e_i)(a^{\ot p} \ot b^{\ot p})
	\]
	defines a Cartan $i$-boundary construction.
\end{theorem*}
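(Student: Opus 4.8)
The plan is to unwind the defining identity of the chain homotopy $H_\psi$ on the generator $e_i \in \cW(p)$ and the element $a^{\ot p} \ot b^{\ot p} \in A^{\ot 2p}$, and to check that after reduction mod $p$ it becomes exactly $\bd\zeta_i(a,b) = C_\psi^p(i)(a,b)$. Writing $\cO = \End(A)$, the $\cyc$-equivariant homotopy $H_\psi \colon \cW \to \cO^\wedge$ between $\tau F_\psi$ and $G_\psi$ satisfies, at the generator $e_i$,
\[
\bd\big(H_\psi(e_i)\big) + H_\psi(\bd e_i) = \tau F_\psi(e_i) - G_\psi(e_i)
\]
in $\cO^\wedge(p) = \Hom(A^{\ot 2p}, A)$, where the first $\bd$ is the differential of the Hom-complex. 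Evaluating on $a^{\ot p}\ot b^{\ot p}$ and using that $a, b$ are cycles, the term $H_\psi(e_i)\circ\bd_{A^{\ot 2p}}$ vanishes, so $\bd\big(H_\psi(e_i)\big)(a^{\ot p}\ot b^{\ot p}) = \bd\zeta_i(a,b)$. It thus remains to show that $H_\psi(\bd e_i)$ vanishes on $a^{\ot p}\ot b^{\ot p}$ modulo $p$, and that $(\tau F_\psi - G_\psi)(e_i)$ evaluates to $C_\psi^p(i)(a,b)$.

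First I would record the invariance that powers every simplification: for odd $p$ the generator $\rho$ fixes $a^{\ot p}\ot b^{\ot p}$. Under \eqref{eq:cyclic to 2-symetric}, $\rho$ permutes the two decks independently by the $p$-cycle, whose Koszul sign on a diagonal tensor is $(-1)^{(p-1)\bars{a}}=(-1)^{(p-1)\bars{b}}=1$. Hence $T=\rho-1$ annihilates $a^{\ot p}\ot b^{\ot p}$ and $N=1+\rho+\cdots+\rho^{p-1}$ acts on it as multiplication by $p$, i.e. as $0$ after reducing mod $p$. Since $\bd e_i$ equals $Te_{i-1}$ or $Ne_{i-1}$ by \eqref{eq: minimal resolution} and \eqref{eq: T and R definition}, the $\cyc$-equivariance of $H_\psi$ rewrites $H_\psi(\bd e_i)(a^{\ot p}\ot b^{\ot p})$ as $H_\psi(e_{i-1})$ applied to $(\rho-1)(a^{\ot p}\ot b^{\ot p})$ or to $N(a^{\ot p}\ot b^{\ot p})$, both $\equiv 0 \pmod p$. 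This mod-$p$ collapse is the heart of the argument, and I expect the sign and divisibility bookkeeping around it to be the main obstacle.

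Next I would identify the two surviving contributions. For $\tau F_\psi(e_i)$, the reordering formula of \cref{ss:reordering} gives $\tau(a^{\ot p}\ot b^{\ot p})=(-1)^{\floor{p/2}\bars{a}\bars{b}}(a\ot b)^{\ot p}$, using $\tfrac{(p-1)p}{2}\equiv\floor{p/2}\pmod 2$; unwinding the operadic composition $\bcirc_\cO$ in the definition of $F_\psi$ then produces $(-1)^{\floor{p/2}\bars{a}\bars{b}}\psi_i\big((a\cp b)^{\ot p}\big)$, the leading term of $C_\psi^p(i)$. For $G_\psi(e_i)$ I would substitute the coproducts $\Delta(e_{2i})$ and $\Delta(e_{2i+1})$. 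The twists $\rho^s,\rho^t,\rho$ all drop out upon evaluation by the invariance above, so in the even case the odd–odd double sum reduces to $\binom{p}{2}=\tfrac{p(p-1)}{2}\equiv 0\pmod p$ equal copies and vanishes, leaving $\sum_{i=j+k}\psi_{2j}(a^{\ot p})\cp\psi_{2k}(b^{\ot p})$; in the odd case both families survive, and the sign $(-1)^{\bars{a}}$ on the $\psi_{2j}(a^{\ot p})\cp\psi_{2k+1}(b^{\ot p})$ summand is forced by the Koszul rule in $\bcirc_\cO$, where the odd-degree factor $\psi_{2k+1}$ must cross $a^{\ot p}$ (of degree $\equiv\bars{a}\pmod 2$). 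These are precisely the subtracted sums in $C_\psi^p(2i)$ and $C_\psi^p(2i+1)$.

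Assembling the three computations yields $\bd\zeta_i(a,b)=(\tau F_\psi - G_\psi)(e_i)(a^{\ot p}\ot b^{\ot p})=C_\psi^p(i)(a,b)$ in $A\ot\Fp$. Finally, the naturality of $\zeta_i$ with respect to $E_\infty$-algebra chain maps is inherited from that of $F_\psi$, $G_\psi$, and $H_\psi$, each built functorially from $\psi$: such a map intertwines the operad actions and therefore commutes both with $H_\psi(p)(e_i)$ and with the formation of $a^{\ot p}\ot b^{\ot p}$. This completes the proposed proof.
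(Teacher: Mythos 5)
Your proof is correct and follows essentially the same route as the paper's: apply the homotopy identity to $e_i$ and $a^{\ot p}\ot b^{\ot p}$, kill $H_\psi(\bd e_i)$ mod $p$ via $\cyc$-equivariance together with the $\rho$-invariance of $a^{\ot p}\ot b^{\ot p}$, and identify $\big(\tau F_\psi - G_\psi\big)(e_i)(a^{\ot p}\ot b^{\ot p})$ with $C^p_\psi(i)(a,b)$. The only difference is that you spell out this last identification (the reordering sign, the vanishing of the $\binom{p}{2}$ odd--odd coproduct terms, the Koszul sign $(-1)^{\bars{a}}$), which the paper states as an observation without proof.
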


\begin{proof}
	First, please observe that
	\[
	C^p_\psi(i)(a,b) =
	\big(\tau F_\psi^p - G_\psi^p\big)(e_i)(a^{\ot p} \ot b^{\ot p}).
	\]
	Second, using that in $A \ot \Fp$
	\begin{align*}
		H_\psi^p(\bd e_{2i})(a^{\ot p} \ot b^{\ot p}) =&\,
		(\rho - 1) H_\psi^p(e_{2i})(a^{\ot p} \ot b^{\ot p}) = 0, \\
		H_\psi^p(\bd e_{2i+1})(a^{\ot p} \ot b^{\ot p}) =&\,
		(1+\rho+\dots+\rho^{p-1}) H_\psi^p(e_{2i+1})(a^{\ot p} \ot b^{\ot p}) = 0,
	\end{align*}
	we have
	\begin{align*}
		\bd_A \zeta_i(a,b) =&
		\bd_A H_\psi^p(e_i)(a^{\ot p} \ot b^{\ot p}) \\ =&
		\bd_{\End(A)} H_\psi^p(e_i)(a^{\ot p} \ot b^{\ot p}) \\ =& \,
		\big(\tau F(e_i) - G(e_i)\big)(a^{\ot p} \ot b^{\ot p}) -
		H_\psi^p(\bd e_i)(a^{\ot p} \ot b^{\ot p}) \\ =&\,
		C^p_\psi(i)(a,b).\qedhere
	\end{align*}
\end{proof}

We now turn to the question of effectively constructing a natural one for the cochains of spaces.

\section{Effective constructions}\label{s:effective}

In this section we effectively construct natural Cartan coboundaries for the cochains of spaces and, more generally, for $E_\infty$-algebras defined using the Barratt--Eccles operad.

\subsection{Barratt--Eccles operad}

We review the main constructions introduced by Berger and Fresse in \cite{berger2004combinatorial}.

Let $\Set$ and $\sSet$ denote the categories of sets and simplicial sets respectively.
The symmetric monoidal functor $\EE \colon \Set \to \sSet$ is defined on objects by $\EE X_n = X^{n+1}$ with
\[
\begin{split}
	\face_i(x_0,\dots,x_n) &= (x_0,\dots,\widehat{x}_i,\dots,x_n), \\
	\dege_i(x_0,\dots,x_n) &= (x_0, \dots, x_i, x_i, \dots, x_n),
\end{split}
\]
and on morphisms by $\EE f_n = f^{\times n}$.
We notice that if $X$ is equipped with a group action then $\EE X$ is as well with
\[
(x_0,\dots,x_n) \cdot g = (x_0 \cdot g, \dots, x_n \cdot g).
\]

The usual block permutation map
\[
\bcirc_{\sym} \colon \sym_r \times \sym_{s_1} \times \cdots \times \sym_{s_r} \to \sym_{s_1+\dots+s_r}
\]
provides $\sym = \{\sym_r\}_{r>0}$ with the structure of an operad in $\Set$.

The simplicial Barratt--Eccles operad $\rE\sym$ is obtained by applying the functor $\EE$ to the operad $\sym$.
We denote its composition by
\[
\bcirc_{\rE\sym} \colon
\rE\sym(r) \times \rE\sym(s_1) \times\dots\times \rE\sym(s_r) \to
\rE\sym(s_1+\dots+s_r).
\]

The (algebraic) Barratt--Eccles operad $\BE$ is defined by $\BE(r) = \chains\EE\sym_r$ with composition given by
\[
\bcirc_{\BE} \defeq \bcirc_{\EE} \circ \EZ \colon \BE(r) \ot \BE(s_1) \ot\dotsb\ot \BE(s_r) \to \BE(s_1+\dots+s_r).
\]
It is an $E_\infty$-operad.
For any simplicial set $X$, Berger and Fresse explicitly constructed a natural operad morphism
\[
\phi \colon \BE \to \End(\cochains(X))
\]
on its (normalized) cochains.
Since only the existence of this effective construction will be used here, we refer to the original source for details, and to the specialized computer algebra system \texttt{ComCH} for an implementation.

\subsection{May--Steenrod structure on Barratt--Eccles algebras}

To provide the cochains of simplicial sets, or more generally any $\BE$-algebra, with a natural May--Steenrod structure, it suffices to define a $\cyc$-equivariant quasi-isomorphism
\[
\iota \colon \cW \to \BE.
\]
We recall from \cite{medina2021may_st} a closed form formula for one such map which factors through the $\cyc$-module
\[
\cC = \{\chains\EE\cyc_r\}_{r\in\N}.
\]
For $r,n\in\N$, let
\begin{equation*}
	\iota(e_{n}) =
	\begin{cases}
		\displaystyle{\sum_{s_1, \dots, s_m}} \big(0, {s_1}, {s_1+1}, {s_2}, \dots, {s_{m}}, {s_{m}+1} \big) & n = 2m, \\
		\displaystyle{\sum_{s_1, \dots, s_m}} \big(0, 1, {s_1}, {s_1+1}, \dots, {s_{m}}, {s_{m}+1} \big) & n = 2m+1,
	\end{cases}
\end{equation*}
where the sum is over all $s_1, \dots, s_m \in \{0, \dots, r-1\} \cong \cyc_r$.
Please consult \cref{f:small values of psi} for a few examples.

\begin{table}
	\centering
	\resizebox{0.8\columnwidth}{!}{%
\renewcommand{\arraystretch}{1.2}
\begin{tabular}{|c||c|c|c|}
	\hline
	$r$ & $n=2$ & $n=3$ & $n=4$ \\
	\hline
	2 & (0,1,0) & (0,1,0,1) & (0,1,0,1,0) \\
	\hline
	3 & (0,1,2) + (0,2,0) & (0,1,2,0) + (0,1,0,1) & \phantom{+} (0,1,2,0,1) + (0,1,2,1,2) \\
	& & & + (0,2,0,1,2) + (0,2,0,2,0) \\
	\hline
	4 & (0,1,2) + (0,2,3) & (0,1,2,3) + (0,1,3,0) & \phantom{+} (0,1,2,3,0) + (0,1,2,0,1) \\
	& + (0,3,0) & + (0,1,0,1) &
	+ (0,1,2,1,2) + (0,2,3,0,1) \\
	& & & + (0,2,3,1,2) + (0,2,3,2,3) \\
	& & & + (0,3,0,1,2) + (0,3,0,2,3) \\
	& & & + (0,3,0,3,0) \\
	\hline
\end{tabular}
}
\vspace*{3pt}
	\caption{The elements $\psi(e_n)$ for small values of $r$ and $n$.}
	\label{f:small values of psi}
\end{table}

Given a Barratt--Eccles algebra $\phi \colon \BE \to \End(A)$ we will consider $A$ as a May--Steenrod complex with the structure defined by $\phi$ and $\iota$.

\subsection{Main diagram}

Our goal is to construct a natural Cartan relator for $\BE$-algebras, that is to say, a natural $\cyc$-equivariant chain homotopy $H \colon \cW \to \End(A)^\vee$ from $\tau F$ to $G$ for any such $A$.
We will do so by constructing $\cyc_r$-equivariant chain homotopies $K_1$, $K_2$, and $K_3$, and group homomorphisms $f$ and $g$ in the following diagram:
\begin{equation}\label{d:big diagram}
	\begin{tikzcd}
	&[0pt] \BE(r) \arrow[r,"\id\, \ot \,e^{\ot r}"]
	&[20pt] \BE(r) \ot \BE(2)^{\ot r}
	\arrow[d,"\bcirc_\BE"]
	&[-20pt] \\
	& & \BE(2r)
	\arrow[d, "\cdot \tau"] & \\
	\cW(r)
	\arrow[ruu,bend left,"\iota"]
	\arrow[r,"\iota"]
	\arrow[d,"\Delta"']
	\arrow[dr,phantom,"K_3"]
	& \cC(r)
	\arrow[uu,hook]
	\arrow[r,bend left,"\chains\EE f \cdot \tau"]
	\arrow[r,bend right,"\chains\EE g"']
	\arrow[d,"\Delta_{\AW}"]
	\arrow[r,phantom,"K_1"]
	& \BE(2r)
	\arrow[r,"\phi"]
	& \End(A)(2r) \\
	\cW(r)^{\ot 2}
	\arrow[r,"\iota^{\ot 2}"']
	\arrow[dr,bend right,"\iota^{\ot 2}"']
	& \cC(r)^{\ot 2}
	\arrow[dr,phantom,"K_2",shift left=7pt]
	\arrow[d,hook] & & \\
	& \BE(r)^{\ot 2}
	\arrow[r,"e\, \ot\, \id"]
	& \BE(2) \ot \BE(r)^{\ot 2}
	\arrow[uu,"\bcirc_\BE"']
\end{tikzcd}
\end{equation}
where the top and bottom compositions are respectively $\tau F$ and $G$, and $e$ denotes the identity in degree $0$.

\subsection{The chain homotopy $K_1$}

Let us focus on the following part of Diagram \eqref{d:big diagram}:
\[
\begin{tikzcd}[column sep=large]
	\cC(r)
	\arrow[r,bend left,"\chains\EE f \cdot \tau"]
	\arrow[r,bend right,"\chains\EE g"']
	\arrow[r,phantom,"K_1"]
	& \BE(2r).
\end{tikzcd}
\]
For $r \in \N$, let $f,g \colon \cyc_r \to \sym_{2r}$ be the following group homomorphisms:
\[
\begin{split}
	&f \colon \cyc_r \hookrightarrow \sym_r \xra{\id \times e^r} \sym_r \times \sym_2^r \xra{\bcirc_{\sym}} \sym_{2r}\,, \\
	&g \colon \cyc_r \hookrightarrow \sym_r \xra{e \times \rD} \sym_2 \times \sym_r \times \sym_r \xra{\bcirc_{\sym}} \sym_{2r}\,,
\end{split}
\]
where $e$ denotes the identity element.
Explicitly,
\begin{align*}
	f(\rho) &= (1,3,\dots,2r-1)(2,4,\dots,2r), \\
	g(\rho) &= (1,2,\dots,r)(r+1,r+2,\dots,2r),
\end{align*}
where $\rho = (1,2,\dots,r)$.
Please notice the following identity involving the reordering $\tau$ of \cref{ss:reordering}:
\begin{equation}\label{eq:conjugation of little maps}
	f(\rho) \tau = \tau g(\rho).
\end{equation}

The map defined by
\[
K_1(a_0,\dots,a_n) =
\sum_{i=0}^n \ (-1)^i (f(a_0) \tau, \dots, f(a_i) \tau, g(a_i), \dots, g(a_n))
\]
can be directly seen to be a chain homotopy from $(\chains \EE f) \cdot \tau$ to $\chains \EE g$, and we can use \cref{eq:conjugation of little maps} together with the fact that $\rho$ acts on $\BE(2r)$ multiplying on the right by $g(\rho)$ to observe that $K_1$ is $\cyc_r$-equivariant.

\subsection{The chain homotopy $K_2$}

Let us now focus on the following part of Diagram~\eqref{d:big diagram}:
\[
\begin{tikzcd}
	\cC(r)
	\arrow[r,"\chains\EE g"]
	\arrow[d,"\Delta_{\AW}"']
	\arrow[ddr,phantom,"K_2",shift left=0pt]
	&[15pt] \BE(2r)
	\\
	\cC(r)^{\ot 2}
	\arrow[d,hook] & \\
	\BE(r)^{\ot 2}
	\arrow[r,"e\, \ot\, \id"]
	& \BE(2) \ot \BE(r)^{\ot 2}.
	\arrow[uu,"\bcirc_\BE"']
\end{tikzcd}
\]
Since the functor $\EE$ is symmetric monoidal, the top and bottom compositions of this diagram agree with those in:
\[
\begin{tikzcd}
	\chains\EE\cyc_r
	\rar[hook]
	&[-10pt]
	\chains\EE\sym_r
	\rar["\chains\EE\rD"]
	\arrow[dr,in=180,out=-90,"\Delta_{\AW}"'] &
	\chains\EE\sym_r^{\times 2}
	\rar["\chains\EE \, e \times \id"]
	\dar["\AW"'] &[0pt]
	\chains\EE(\sym_2 \times \sym_r^{\times 2})
	\rar["\chains\bcirc_{\EE\!\sym}"]
	\dar["\AW"']
	\drar["\id",out=-10,in=150] &
	\chains\EE\sym_{2r} \\ & &
	\chains\EE\sym_r^{\ \ot 2}
	\rar["e \,\ot\, \id"] &
	\chains\EE\sym_2 \ot \chains\EE\sym_r^{\ \ot 2}
	\rar["\EZ"]
	\uar["\Shi",phantom,shift right=30pt] &
	\chains\EE(\sym_2 \times \sym_r^{\times 2})
	\uar["\chains\bcirc_{\EE\!\sym}"']
\end{tikzcd}
\]
Therefore, it suffices to define
\[
K_2 = \chains\bcirc_{\EE\sym} \circ \Shi \circ \chains\EE(e \times \id) \circ \chains\EE\rD
\]
as the desired $\cyc_r$-equivariant chain homotopy.

\subsection{The chain homotopy $K_3$} \label{ss:coproduct}

We now study the failure of the $\cyc$-equivariant quasi-isomorphism $\iota$ to preserve coalgebra structures.
More precisely, we will construct a $\cyc$-equivariant chain homotopy $K_3$ making the following diagram commute:
\begin{equation}\label{d:coproducts}
	\begin{tikzcd}[column sep=large]
			\cW \arrow[r,"\iota"] \arrow[d,"\Delta"'] \arrow[dr,phantom,"K_3"]&
			\cC \arrow[d,"\Delta_{\AW}"] \\
			\cW^{\ot 2} \arrow[r,"\iota^{\ot 2}"'] &
			\cC^{\ot 2}.
		\end{tikzcd}
\end{equation}

Using the bijection $\cyc_r \cong \set{0,\dots,r-1}$, we denote by $\alpha(a_1,\dots,a_\ell)$ the number of increasing consecutive pairs $a_i < a_{i+1}$ in a basis element of $\cC(r)$.
For two such elements $(s_1,\dots,s_j)$ and $(t_1,\dots,t_k)$ we denote respectively by $\varphi(0,s_1,\dots,s_j;t_1,\dots,t_k)$ and $\varphi(1,s_1,\dots,s_j;t_1,\dots,t_k)$ the following expressions in $\cC(r) \ot \cC(r)$:
\[
\alpha(s_1,\dots,s_j,t_1) \cdot
(0,s_1,s_1+1,\dots,s_j,s_j+1) \ot
(t_1,t_1+1,\dots,t_k,t_k+1),
\]
and
\[
- \alpha(s_1-1,\dots,s_j-1,t_1-1) \cdot
(0,1,s_1,s_1+1,\dots,s_j,s_j+1)\otimes (t_1,t_1+1,\dots,t_k,t_k+1).
\]
Then, the chain homotopy $K_3$ is defined by
\[
\begin{split}
	K_3(e_{2i})   &= \sum \, \varphi(0,s_1,\dots,s_j;t_1,\dots,t_k), \\
	K_3(e_{2i+1}) &= \sum \, \varphi(1,s_1,\dots,s_j;t_1,\dots,t_k),
\end{split}
\]
where the sums are taken over all $s_1,\dots,s_j,t_1,\dots,t_k \in \cyc_r$ with $i+1 = j+k$.
Please consult \cref{f:small values of K} for a few examples.

The verification that $K_3$ is a $\cyc_r$-equivariant chain homotopy from $(\iota\otimes \iota)\circ \Delta$ to $\Delta_{\AW}\circ \iota$ is done through a somewhat arduous computation presented in \cref{s:postponed}.

\begin{table}
	\centering
	\resizebox{0.8\columnwidth}{!}{%
\renewcommand{\arraystretch}{1.2}
$\begin{array}{|c||c|c|}
	\hline
	n & r=3 & r=4 \\
	\hline
	2 &   & \phantom{+}(0,1,2)\otimes (2,3) \\
	&(0,1,2)\otimes(0,1) &+ (0,1,2)\otimes(3,0) \\
	&&+ (0,2,3)\otimes (3,0) \\
	\hline
	3 &   &\phantom{+} (0,1,2,3)\otimes(3,0) \\
	&(0,1,2,0)\otimes (0,1)&+ (0,1,2,3)\otimes(0,1) \\
	&& + (0,1,3,0)\otimes (0,1) \\
	\hline
	4 & \phantom{+} (0,1,2,0,1)\otimes (1,2) + (0,1,2,0,1)\otimes (2,0)& \\
	&+ (0,1,2,1,2)\otimes(2,0) + (0,2,0,1,2)\otimes(2,0) & \text{33 terms} \\
	&+ (0,1,2)\otimes (2,0,1,2) + (0,1,2)\otimes (2,0,2,0) &\\
	\hline
	5 & \phantom{+}(0,1,2,0,1,2)\otimes (2,0) + (0,1,2,0,1,2)\otimes (0,1)& \\
	& + (0,1,2,0,2,0)\otimes(0,1) + (0,1,0,1,2,0)\otimes(0,1)& \text{33 terms}\\
	& + (0,1,2,0)\otimes (0,1,2,0) + (0,1,2,0)\otimes (0,1,0,1)& \\
	\hline
\end{array}$
}
\vspace*{5pt}

	\caption{The elements $K_3(e_n)$ for small values of $r$ and $n$. For $r=2$ or $n<2$ all vanish. Notice that the indices are flipped with respect to \cref{f:small values of psi}.}
	\label{f:small values of K}
\end{table}

\subsection{Main construction}

Bringing these constructions together using Diagram~\eqref{d:big diagram} we obtain the following.

\begin{theorem*}
	Let $A$ be a Barratt--Eccles algebra with structure map $\phi$.
	Then,
	\[
	H = \phi \circ K_1 \circ \iota \ +\ \phi \circ K_2 \circ \iota \ +\ \phi \circ \bcirc_{\BE} \circ (e \ot \id)\circ K_3
	\]
	is a natural Cartan relator for $A$.
	Therefore, for $i \in \N$ and mod $p$ cocycles $a,b \in A$
	\[
	\bd H(e_i)(a^{\ot p} \ot b^{\ot p}) = C_\psi^p(i)(a, b)
	\]
	where $C_\psi^p(i)(a, b)$ is the lift of the Cartan formula defined in {\rm \cref{ss:cartan_coboundary}}.
\end{theorem*}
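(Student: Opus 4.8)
The plan is to first establish that $H$ is a Cartan relator for $A$, and then to read off the boundary formula from the theorem in \cref{ss:cartan_coboundary}. Granting that $H$ is a $\cyc$-equivariant chain homotopy with $\bd H + H\bd = \tau F - G$, that theorem, applied to the May--Steenrod structure $\psi = \phi \circ \iota$, yields $\bd H(e_i)(a^{\ot p} \ot b^{\ot p}) = C_\psi^p(i)(a,b)$ on the nose: the only possible discrepancy is the correction term $H(\bd e_i)(a^{\ot p} \ot b^{\ot p})$, and this vanishes in $A \ot \Fp$ because $\bd e_i$ is a multiple of $T = \rho - 1$ or of $N = 1 + \rho + \dots + \rho^{p-1}$, both of which annihilate $a^{\ot p} \ot b^{\ot p}$ modulo $p$, exactly as in the proof of that theorem. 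Hence the whole task reduces to showing that $H$ realizes the chain homotopy encoded by Diagram~\eqref{d:big diagram}.

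Next I would pin down the geometry of that diagram by recording its two strictly commuting features. First, $\iota \colon \cW \to \BE$ factors through the inclusion $\cC \hookrightarrow \BE$, which lets me reroute both legs through $\cC(r)$. Second, since composing with the degree-zero identity $e$ is a block insertion of copies of $\sym_2$, the explicit formulas for $f$ and $g$ give the strict identity $\bcirc_\BE \circ (\id \ot e^{\ot r}) = \chains\EE f$ on $\cC(r)$; together with the top triangle this identifies the top leg with $\phi \circ \big((\chains\EE f) \cdot \tau\big) \circ \iota = \tau F$ and exhibits the region filled by $K_1$. Writing $B = \bcirc_\BE \circ (e \ot \id) \circ \Delta_{\AW}$ for the corresponding arc out of $\cC(r)$, the bottom leg is $G = \phi \circ \bcirc_\BE \circ (e \ot \id) \circ \iota^{\ot 2} \circ \Delta$, and the two remaining regions are precisely the $K_2$-square, comparing $\chains\EE g$ with $B$, and the $K_3$-square, comparing $\iota^{\ot 2} \circ \Delta$ with $\Delta_{\AW} \circ \iota$.

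With this in place the assembly is a telescoping computation. Since $\iota$, the inclusion $\cC \hookrightarrow \BE$, $e \ot \id$, $\bcirc_\BE$, and $\phi$ are all chain maps, each summand of $H$ satisfies $\bd(\phi K_1 \iota) + (\phi K_1 \iota)\bd = \phi(\bd K_1 + K_1 \bd)\iota$, and likewise for the $K_2$ and $K_3$ summands. Substituting the three homotopy identities --- namely $\bd K_1 + K_1\bd = \chains\EE g - (\chains\EE f)\cdot\tau$, the $K_2$-identity comparing $\chains\EE g$ with $B$, and $\bd K_3 + K_3\bd = \Delta_{\AW} \circ \iota - \iota^{\ot 2} \circ \Delta$ from \cref{s:postponed} --- and adding, the two intermediate maps $\phi \circ (\chains\EE g) \circ \iota$ and $\phi \circ B \circ \iota$ each occur twice with opposite signs and cancel, leaving exactly $\tau F - G$. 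Equivariance is then immediate: each $K_i$ is $\cyc_r$-equivariant by its construction and the structural maps are $\cyc$-equivariant, so $H$ is a $\cyc$-equivariant chain homotopy, i.e. a Cartan relator.

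The main obstacle is the sign bookkeeping that forces the two intermediate terms to cancel rather than to double: one must orient the three regions of Diagram~\eqref{d:big diagram} consistently so that $\phi \circ (\chains\EE g) \circ \iota$ appears as a target for $K_1$ and a source for $K_2$, and $\phi \circ B \circ \iota$ as a target for $K_2$ and a source for $K_3$, and it is this matching that fixes the relative signs in the definition of $H$. Verifying the homotopy identity for $K_2$ is the most laborious of these, resting on the Eilenberg--Zilber relation $\AW \circ \EZ = \id + \bd \circ \Shi + \Shi \circ \bd$ and on $\EE$ being symmetric monoidal, while the identity for $K_3$ is the long computation deferred to \cref{s:postponed}. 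Once the three regions are known to glue, the telescoping forces $\bd H + H\bd = \tau F - G$, and the boundary formula follows as in the first paragraph.
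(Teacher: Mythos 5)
Your proposal is correct and follows the paper's intended argument: the paper gives no separate proof of this theorem, deriving it exactly as you do by pasting the three homotopies $K_1$, $K_2$, $K_3$ along the regions of Diagram~\eqref{d:big diagram} so that the intermediate composites telescope to leave $\tau F - G$, and then invoking the theorem of \cref{ss:cartan_coboundary} for the boundary formula. Your explicit identification of the strictly commuting pieces (the factorization through $\cC$ and the identity $\bcirc_\BE \circ (\id \ot e^{\ot r}) = \chains\EE f$) just spells out what the paper leaves implicit.
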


\section{Postponed proof}\label{s:postponed}

In this section we prove that $K_3 \colon \cW(r) \to \cC(r) \ot \cC(r)$ is a $\cyc$-equivariant chain homotopy between $(\iota \ot \iota) \circ \Delta$ and $\Delta_{\AW} \circ \iota$.
In \cref{ss:equivariant homotopy general} we give a general procedure to construct equivariant homotopies, which we use to define a $\cyc$-equivariant chain homotopy $K$ between $(\iota\ot \iota) \circ \Delta$ and $\Delta_{\AW} \circ \iota$.
In \cref{ss:closed formula for K} we give a closed form formula for $K$, and in \cref{ss:comparins K and K3} we show that this formula agrees with the one defining $K_3$.

\subsection{Equivariant homotopy construction}\label{ss:equivariant homotopy general}

\begin{lemma}
	Let $G$ be a group, $C$ a bounded-below graded chain complex of free $\Z[G]$-modules with a basis, and $D$ a graded chain complex of $\Z[G]$-modules together with a $\Z$-linear endomorphism $\eta$ satisfying for some chain endomorphism $\varepsilon$ that:
	\begin{enumerate}
		\item $\bd \circ \, \eta + \eta \circ \bd = \id_C - \varepsilon$,
		\item $\varepsilon \circ \eta = 0$.
	\end{enumerate}
	Then, for any pair of $\Z[G]$-linear chain maps $\mu,\nu \colon C \to D$ with $\varepsilon \circ (\mu-\nu) = 0$, the $\Z[G]$-linear map $K \colon C \to D$ recursively defined on basis elements by
	\[
	K(b) = \eta \circ (\mu - \nu - K \circ \bd)(b),
	\]
	satisfies
	\[
	\bd \circ \, K + K \circ \bd = \mu - \nu.
	\]
\end{lemma}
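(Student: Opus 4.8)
The plan is to verify the homotopy identity $\bd \circ K + K \circ \bd = \mu - \nu$ on a single basis element at a time, inducting on homological degree; the induction is well founded because $C$ is bounded below. Since $\bd$, $K$, and $\mu-\nu$ are all $\Z[G]$-linear, both sides of the desired identity are $\Z[G]$-linear maps $C \to D$, so it suffices to check equality on a $\Z[G]$-basis of $C$. I would abbreviate $\chi = \mu - \nu$ and use throughout that $\chi$ is a chain map, hence $\bd\chi = \chi\bd$, and that $\varepsilon\chi = 0$ by hypothesis.

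First I would record the one structural fact that makes the recursion work: by its very definition $K(b) = \eta(\dots)$ lies in the image of $\eta$ for every basis element $b$, so $K(\bd b)$ is a $\Z[G]$-combination of elements in the image of $\eta$, and applying the $\Z[G]$-linear endomorphism $\varepsilon$ together with condition~(2), namely $\varepsilon \circ \eta = 0$, yields $\varepsilon K(\bd b) = 0$. This is the step I expect to be the main obstacle, precisely because $K$ is declared $\Z[G]$-linear while $\eta$ is only $\Z$-linear: off the basis one has $K(gb) = gK(b)$ rather than an honest value of $\eta$, so membership in the image of $\eta$ is available only on basis elements and must be propagated through the $G$-action using the equivariance of $\varepsilon$.

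For the inductive step I would fix a basis element $b$ of degree $d$ and assume the identity on all basis elements of degree $<d$; by $\Z[G]$-linearity and $\bd^2 = 0$ this gives $\bd K(\bd b) = \chi(\bd b)$. Substituting the recursion $K(b) = \eta\big(\chi(b) - K(\bd b)\big)$ and rewriting $\bd\eta$ via condition~(1) as $\id - \varepsilon - \eta\bd$, I would expand
\[
\bd K(b) = (\id - \varepsilon - \eta\bd)\big(\chi(b) - K(\bd b)\big)
\]
and simplify the resulting six terms: $\varepsilon\chi(b) = 0$ by hypothesis; $\varepsilon K(\bd b) = 0$ by the structural fact above; and the two remaining terms cancel because $\eta\bd\chi(b) = \eta\chi(\bd b) = \eta\bd K(\bd b)$, using the chain-map property of $\chi$ and the inductive conclusion $\bd K(\bd b) = \chi(\bd b)$. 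What survives is $\bd K(b) = \chi(b) - K(\bd b)$, that is $\bd K(b) + K(\bd b) = \chi(b)$, which is exactly the identity on $b$. The base case in lowest degree, where $\bd b = 0$, is the same computation with every $K(\bd b)$ and $\bd\chi(b)$ term vanishing, so it requires no separate treatment.
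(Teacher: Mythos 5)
Your proof is correct and follows essentially the same route as the paper's: induction on the degree of basis elements, expansion of $\bd K(b)$ via $\bd\eta = \id - \varepsilon - \eta\bd$, elimination of the $\varepsilon$-terms using $\varepsilon(\mu-\nu)=0$ and $\varepsilon\eta=0$, and cancellation of the $\eta\bd$-terms using the inductive hypothesis together with the chain-map property of $\mu-\nu$. If anything, you are more explicit than the paper about the one delicate point --- that $\varepsilon K(\bd b)=0$ must be deduced from $\varepsilon\circ\eta=0$ even though the formula $K=\eta(\mu-\nu-K\bd)$ holds only on basis elements and $K$ is then extended $\Z[G]$-linearly --- which the paper's proof glosses over by writing $\varepsilon K\bd(b)=\varepsilon\eta(\cdots)(\bd b)$ directly.
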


\begin{proof}
	We will use an induction argument.
	Suppressing composition symbols we have
	\begin{align*}
		(\bd K+ 'bd)(b)
		&= \bd \eta(\mu-\nu-K\bd)(b) + K\bd(b) \\
		&= (\id_C - \varepsilon - \eta\bd)(\mu-\nu - K\bd)(b) + K\bd(b).
	\end{align*}
	If $b$ is a cycle we are done since $\varepsilon(\mu-\nu) = 0$ by assumption, in particular if $b$ is of lowest degree. For an arbitrary $b$, since $\varepsilon \eta = 0$ we have $\varepsilon K \bd(b) \defeq \varepsilon \eta(\mu-\nu-K)(\bd b) = 0$, so $\varepsilon(\mu-\nu-K\partial) = 0$.
	Additionally, by the induction assumption $\bd K (\bd b) = (\mu-\nu-K\bd)(\bd b) = (\mu-\nu)(\bd b)$ we have $\eta \bd (\mu-\nu - K\bd)(b) = 0$.
	Therefore, we are left with
	\begin{align*}
		(\bd K+ K\bd)(b) &=
		\id_C(\mu-\nu-K\bd)(b)+K\bd(b) \\&=
		(\mu-\nu)(b)
	\end{align*}
	as desired.
\end{proof}

We can apply this lemma to obtain a $\cyc$-equivariant homotopy $K$ from $\mu = (\iota \ot \iota) \circ \Delta$ to $\nu = \Delta_{\AW} \circ \iota$, using the endomorphisms of $\cC(r) \ot \cC(r)$ given by
\[
\eta \big((s_0,\dots,s_j) \ot (t_0,\dots,t_k)\big) = (0,s_0,\dots,s_j) \ot (t_0,\dots,t_k),
\]
and
\[
\varepsilon(s \ot t) =
\begin{cases}
	\rho^0 \ot t & \text{if } \deg(s \ot t) = 0, \\
	\hfil0 & \text{otherwise}.
\end{cases}
\]

\subsection{A closed formula for $K$}\label{ss:closed formula for K}

Using the bijection $\cyc_r\cong\{0,1,\dots,r-1\}$ the group $\cyc_r$ receives a total order.
Consider additionally the cyclic order on $\cyc_r$, which is a ternary relation given on elements $a,b,c\in \{0,1,\dots,r-1\}$ as follows: $a\prec b\prec c$
if there are representatives $\bar{a},\bar{b},\bar{c}$ of the classes $[a],[b],[c]$ in $\Z_r$ such that $\bar{a}<\bar{b}<\bar{c}<\bar{a}+r$.

For $(s_1,\dots,s_j)$ in $\EE\cyc_r$ we use the following notation:
\[
s_q^- =
\begin{cases}
	s_{q-1}+1 & \text{if } q>1, \\
	0 & \text{if } q=1,
\end{cases}
\]
and for each $1\leq q\leq j$, write $\theta_q(0,s_1,\dots,s_j;t_1,\dots,t_k)$ for
\[
(0,s_1,s_1+1,\dots,s_j,s_j+1)\ot (t_1,t_1+1,\dots,t_k,t_k+1)
\]
if $s_q^- \prec s_q \prec t_1$ and $0$ otherwise, and $\theta_q(1,s_1,\dots,s_j;t_1,\dots,t_k)$ for
\[
(0,1,s_1,s_1+1,\dots,s_j,s_j+1)\ot (t_1,t_1+1,\dots,t_k,t_k+1)
\]
if $s_q^--1 \prec s_q-1 \prec t_1-1$ and $0$ otherwise.

\begin{lemma*}
	The map $K$ is defined by the following expressions
	\begin{align} \label{eq:homotopyK'1}
		K(e_{2i}) &= \sum_{i+1 = j+k} \sum_{q=1}^j \
		\theta_q(0,s_1,\dots,s_j;t_1,\dots,t_k), \\ \label{eq:homotopyK'2}
		K(e_{2i+1}) &= -\sum_{i+1 = j+k} \sum_{q=1}^j \
		\theta_q(1,s_1,\dots,s_j;t_1,\dots,t_k),
	\end{align}
	where the sum is taken over all $s_1,\dots,s_j,t_1,\dots,t_k\in \{0,1,\dots,r-1\}$.
\end{lemma*}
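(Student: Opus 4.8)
The plan is to check that the map $\Phi$ given by the right-hand sides of \eqref{eq:homotopyK'1} and \eqref{eq:homotopyK'2} satisfies the recursion $\Phi(e_n) = \eta \circ (\mu - \nu - \Phi \circ \bd)(e_n)$ that defines $K$ in the preceding lemma, where $\mu = (\iota \ot \iota) \circ \Delta$ and $\nu = \Delta_{\AW} \circ \iota$. Since this recursion expresses the value on $e_n$ in terms of values on $\bd e_n$, which lies in strictly lower degree, it determines the $\cyc_r$-equivariant map $K$ uniquely on the free basis $\{e_n\}$; hence verifying the same identity for $\Phi$ proves $\Phi = K$. I would induct on $n$, the cases $n \le 1$ being immediate: both closed expressions are empty there, and a direct computation gives $\mu(e_0) = \nu(e_0)$ and $\mu(e_1) = \nu(e_1)$, so the right-hand side vanishes as well.

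The computation rests on one elementary property of $\eta$: prepending the vertex $0$ to a tensor whose first factor already begins with $0$ produces a degenerate simplex, so $\eta$ annihilates every such summand and is injective on the rest. Because each simplex of $\iota(e_n)$ starts at $0$ and the Alexander--Whitney front faces retain this initial vertex, every summand of $\nu(e_n)$ begins with $0$; thus $\eta \circ \nu = 0$ and the recursion collapses to $\Phi(e_n) = \eta(\mu - \Phi \circ \bd)(e_n)$. For $n = 2i$ I would expand $\mu(e_{2i})$ through the coproduct of $\cW$ and the formula for $\iota$: the summands $e_{2j} \ot e_{2k}$ die, while the summands $\rho^s e_{2j+1} \ot \rho^t e_{2k+1}$ survive precisely for $s \ge 1$ and, after $\eta$, reproduce the $q = 1$ terms $\sum \theta_1(0; \dots)$, the constraint $s < t$ becoming $0 \prec s_1 \prec t_1$. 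The remaining terms $\sum_{q \ge 2} \theta_q$ must then come from $-\eta\bigl(N\,\Phi(e_{2i-1})\bigr)$ via the equivariance of $\Phi$ and $\bd e_{2i} = N e_{2i-1}$, where each rotation $\eta \rho^a$ with $a \ge 1$ inserts a new leading block and shifts the blocks of $\Phi(e_{2i-1})$ up by one index, turning the inner sum of $\Phi(e_{2i-1})$ together with the index $a$ into the sum over $q \ge 2$. For $n = 2i+1$, both summands of $\Delta(e_{2i+1})$ have first factors beginning with $0$, so $\eta \circ \mu(e_{2i+1}) = 0$ and, using $\bd e_{2i+1} = (\rho - 1)e_{2i}$ together with $\eta \circ \Phi(e_{2i}) = 0$, the identity reduces to the clean statement $\Phi(e_{2i+1}) = -\eta \rho\, \Phi(e_{2i})$, which visibly yields $-\sum \theta_q(1; \dots)$ with the correct sign.

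The main obstacle is the combinatorial bookkeeping inside the word ``reproduce'' above: one must match, monomial by monomial, the cyclic-order predicate $s_q^- \prec s_q \prec t_1$ attached to $\theta_q(0; \dots)$ against the shifted predicate $s_q^- - 1 \prec s_q - 1 \prec t_1 - 1$ of $\theta_q(1; \dots)$ after translating by $\rho^a$ or $\rho$. These conditions do not agree literally at the seam between the newly created block and the old ones, nor in the wraparound cases where $t_1 = s_q^-$ forces the two outer entries of the predicate to coincide; reconciling them—by checking that every discrepant summand is either degenerate in the normalized complex or cancels, and that the $\pm 1$ shifts in the definition of $\theta_q$ are calibrated for exactly this—is the delicate heart of the argument, and the wraparound terms are where the precise conventions must be handled with the greatest care. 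I would carry this out by fixing the bidegree $(j,k)$ with $j + k = i + 1$ and comparing the coefficients of each monomial tensor $(0, s_1, s_1 + 1, \dots, s_j, s_j + 1) \ot (t_1, t_1 + 1, \dots, t_k, t_k + 1)$ on the two sides, reducing the whole statement to an elementary question about when a cyclically ordered triple stays ordered under a unit shift.
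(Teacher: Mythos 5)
Your proposal is correct and follows essentially the same route as the paper: induction on degree via the defining recursion $K=\eta\circ(\mu-\nu-K\circ\bd)$, using that $\eta\circ\nu=0$, that $\eta\circ\mu(e_{2i+1})=0$ while $\eta\circ\mu(e_{2i})$ yields exactly the $q=1$ terms, and that the boundary terms $Ne_{2i-1}$ and $(\rho-1)e_{2i}$ account for the remaining $\theta_q$'s. The monomial-by-monomial matching you flag as the delicate remaining step is precisely the computation the paper performs (the identity $\eta(N\theta_q(1,\dots))=\sum\theta_{q+1}(0,\dots)$ and the reduction of the odd case to $-\eta\rho K(e_{2i})$).
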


\begin{proof} 
 	Let us proceed by induction.
	In degree $0$ we have that $\mu(e_0) = \nu(e_0) = 0\ot 0$, therefore we have that
	\[K(e_0) = \eta \circ (\mu-\nu-K \circ \partial)(e_0) = 0.\]
	which agrees with \eqref{eq:homotopyK'1} for $i=0$.
	Assume now that we have proven that the homotopy has that form up to degree $2i$ with $i\geq 0$.
	Then
	\[K(e_{2i+1}) = \eta \circ (\mu-\nu-K \circ \partial)(e_{2i+1}).\]
	Since $\eta \circ \nu(e_{2i+1}) = 0$ and $\eta \circ \mu(e_{2i+1}) = 0$, we are left with
	\[-\eta \circ K \circ \partial(e_{2i+1}) = -\eta \circ K (\rho e_{2i+1}) + \eta \circ K (e_{2i+1}).
	\]
	The second summand vanishes, because all terms in $K(e_{2i+1})$ start with $0$, and the first summand is precisely the formula \eqref{eq:homotopyK'2}.

	Assume now that we have proven that the homotopy has that form up to degree $2i-1$ with $i>0$.
	Then
	\[K(e_{2i}) = \eta \circ (\mu-\nu-K \circ \partial)(e_{2i}).\]
	Again, $\eta \circ \nu(e_{2i}) = 0$, but now a careful look reveals that
	\begin{equation}\label{eq:hK1}\eta \circ \mu(e_{2i}) = \sum_{j+k=i+1}\theta_1(0,s_1,\dots,s_j;t_1,\dots,t_j).
	\end{equation}
	Regarding the summand $\eta \circ K \circ \partial(e_{2i})$, we have
	\[\eta(N\theta_q(1,s_1,\dots,s_j,t_1,\dots,t_k)) = \sum_{s_1' = 1}^{r-1}\theta_{q+1}(0,s_1',s_2',\dots,s_{j+1}';t_1,\dots,t_k),\]
	where $s_{i+1}' = s_i$ for $i\geq 1$, and therefore
	\begin{align*} \label{eq:hK2}
		-\eta \circ K \circ \partial(e_{2i})
		&= -\eta\left(-\sum_{j+k = i}\sum_{q=1}^j N\theta_q(1,s_1,\dots,s_j;t_1,\dots,t_k)\right) \\
		&= \sum_{j+k = i+1}\sum_{q=2}^{j+1}\theta_q(0,s_1,\dots,s_{j+1};t_1,\dots,t_k).
	\end{align*}
	Putting together this last formula and \eqref{eq:hK1}, we obtain the formula \eqref{eq:homotopyK'1}.
\end{proof}

\subsection{Comparing $K$ and $K_3$}\label{ss:comparins K and K3}

The agreement of $K$ and $K_3$ follows from the following.

\begin{lemma*}
	For any $(s_1,\dots,s_j)$ and $(t_1,\dots,t_k)$ in $\EE\cyc_r$ we have:
	\begin{align*}
		\varphi(0,s_1,\dots,s_j;t_1,\dots,t_k) &= \sum_{q=1}^j \theta_q(0,s_1,\dots,s_j;t_1,\dots,t_k) \\
		\varphi(1,s_1,\dots,s_j;t_1,\dots,t_k) &= -\sum_{q=1}^j \theta_q(1,s_1,\dots,s_j;t_1,\dots,t_k).
	\end{align*}
\end{lemma*}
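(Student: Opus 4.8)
The plan is to reduce both asserted identities to a single statement about the integer coefficients, and then to prove that statement by a telescoping argument resting on one three-term identity relating the linear and cyclic orders. Throughout, for a proposition $P$ I write $[P]\in\set{0,1}$ for its truth value.

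First I would observe that every nonzero summand on either side of each identity is a scalar multiple of one and the same basis tensor: in the ``$0$'' case this is $(0,s_1,s_1+1,\dots,s_j,s_j+1)\ot(t_1,t_1+1,\dots,t_k,t_k+1)$, and in the ``$1$'' case its $(0,1,s_1,\dots)$ analogue. Indeed each $\theta_q$, when its cyclic condition holds, equals exactly this tensor, while $\varphi$ is an integer multiple of it. Consequently, if the tensor is degenerate -- equivalently if $s_q=s_q^-$ for some $q$, or if the $t$-factor repeats -- then every term on both sides vanishes in $\cC(r)\ot\cC(r)$ and there is nothing to prove. So I may assume non-degeneracy, i.e.\ $s_q\neq s_q^-$ for all $q$, whereupon the identities collapse to the equality of coefficients
\[
\alpha(s_1,\dots,s_j,t_1)=\#\set{1\leq q\leq j : s_q^-\prec s_q\prec t_1}
\]
in the ``$0$'' case, and to the analogue with $s_q,t_1$ replaced by $s_q-1,t_1-1$ in the ``$1$'' case.

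For the ``$0$'' case I would argue by telescoping rather than by manipulating the cyclic conditions directly. Writing the ascent count as $\alpha(s_1,\dots,s_j,t_1)=\sum_{q=1}^{j-1}[s_q<s_{q+1}]+[s_j<t_1]$ and rearranging, one obtains $\alpha=[s_1<t_1]+\sum_{q=2}^{j}\big([s_{q-1}<s_q]+[s_q<t_1]-[s_{q-1}<t_1]\big)$, where the inner sum telescopes in its last two terms. The base term matches the $q=1$ condition: since $s_1\neq 0$, one has $[s_1<t_1]=[0\prec s_1\prec t_1]$. The heart of the matter is then the three-term identity
\[
[a+1\prec b\prec c]=[a<b]+[b<c]-[a<c]\qquad(b\neq a+1),
\]
applied with $a=s_{q-1}$, $b=s_q$, $c=t_1$; as $s_q^-=s_{q-1}+1$ for $q\geq 2$, its left side is exactly the $q$-th cyclic condition, so each telescoped summand equals the corresponding indicator and the coefficient identity follows.

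I expect the proof of this three-term identity to be the main obstacle, since it is where the linear order (which ``knows'' the cut at $0$ through the wraparound in $a+1$) must be reconciled with the rotation-invariant cyclic order. I would prove it from the remark that for pairwise distinct $x,y,z$ one has $[x<y]+[y<z]-[x<z]=[x\prec y\prec z]$ (the right side is $1$ precisely on the three cyclic rotations of an increasing triple), together with the observation that replacing $a$ by $a+1$ leaves $[\,\cdot\prec b\prec c\,]$ unchanged except at the two boundary positions $a+1=b$ and $a=c$; the former is excluded by hypothesis and the latter is a coincidence case. A short check -- distinct versus coincident values of $a,b,c$, plus the wraparound instance $a=r-1$ -- then settles it. Finally, I would deduce the ``$1$'' case from the ``$0$'' case by the substitution $u_q=s_q-1$, $v=t_1-1$: this shift sends the leading vertex $1$ of the $(0,1,\dots)$-simplex to $0$, turns the coefficient $\alpha(s_1-1,\dots,s_j-1,t_1-1)$ into $\alpha(u_1,\dots,u_j,v)$, and, by rotation-invariance of $\prec$, turns each ``$1$''-condition into the ``$0$''-condition $u_q^-\prec u_q\prec v$, reducing it verbatim to the case already proved.
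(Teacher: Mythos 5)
Your argument is correct, but your proof of the key counting identity $\alpha(s_1,\dots,s_j,t_1)=\card\set{q : s_q^-\prec s_q\prec t_1}$ goes by a genuinely different route than the paper's. The paper lifts $(s_1,\dots,s_j)$ to the unique integer sequence $(s'_1,\dots,s'_j)$ with $0<s'_1<r$ and $1<s'_{q+1}-s'_q\leq r$, characterizes the vanishing of $\theta_q$ by whether the interval $(s'_{q-1},s'_q]$ (or $[0,s'_1]$ for $q=1$) contains a representative of $[t_1]$, and then counts all such representatives in $[0,s'_j]$ at once, obtaining $Q=\card\set{s_i\geq s_{i+1}}$ or $Q+1$ and hence $j-Q=\alpha$. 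You instead stay inside $\Z_r$ and match the two sides term by term, telescoping the ascent count and invoking the identity $[a+1\prec b\prec c]=[a<b]+[b<c]-[a<c]$ for $b\neq a+1$. I verified this identity in all coincidence cases ($a=b$, $b=c$, $a=c$, $c=a+1$) and at the wraparound $a=r-1$, so your strategy goes through; it is more local and isolates exactly which single inequality each cyclic condition contributes, while the paper's global interval count is shorter once the lift is in place. The reduction of the second identity to the first by the shift $u_q=s_q-1$, $v=t_1-1$ is the same in both proofs.

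One point you share with the paper but should make explicit: for $q=1$ the condition defining $\theta_1(1,\dots)$ literally reads $s_1^--1\prec s_1-1\prec t_1-1$ with $s_1^-=0$, i.e.\ $-1\prec s_1-1\prec t_1-1$, which is \emph{not} the shifted $0$-condition $0\prec u_1\prec v$ that your substitution (and the paper's ``second case'') produces; the two differ precisely when $t_1=0$. For instance at $r=3$, $s_1=2$, $t_1=0$ one has $\varphi(1,2;0)=-(0,1,2,0)\ot(0,1)$ while the literal $\theta_1(1,2;0)$ vanishes since its condition is $2\prec 1\prec 2$. Comparing with the recursive computation of $K(e_3)$ for $r=3$, which yields $-(0,1,2,0)\ot(0,1)$, shows that the intended condition is the shifted $0$-condition, so in the odd case the left entry of the $q=1$ condition should be read as $0$ (equivalently $s_1^-=1$ there) rather than $-1$. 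Your substitution silently makes this correction, exactly as the paper's own proof does; with that reading your argument is complete.
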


\begin{proof}
	Observe first that if $s_{i+1} = s_i+1$ for some $i$, or $s_1 = 0$ (in the first case) or $s_1=1$ (in the second case) we get degenerate summands, so we do not treat these cases in the following computations.
	For a number $a\in \{0,1,\dots,r-1\}$, we will write $[a]$ for the class of $a$ in $\Z_{r}$.

	\vspace*{5pt}\noindent\textit{First case}.
	Let $(s_1',\dots,s'_j)$ be the only sequence of integers such that:
	\begin{enumerate}
		\item $[s'_i] = [s_i]$,
		\item $0< s_1'< r$,
		\item $1 < s_{i+1}' - s_i' \leq r$.
	\end{enumerate}
	Then $\theta_q(0,s_1,\dots,s_j;t_1,\dots,t_k)$ will vanish for $q>1$ if and only if the interval $(s_{q-1}',s_q']$ contains a representative of $[t_1]$, and will vanish for $q=1$ if and only if the interval $[0,s_1']$ contains a representative of $[t_1]$.
	Therefore the number of vanishing summands in $\sum_{q=1}^j\theta_q(0,s_1,\dots,s_j;t_1,\dots,t_k)$ equals the number $Q$ of representatives of $[t_1]$ that lie in the interval $[0,s_j']$, which in turn is computed by
	\[
	Q =
	\begin{cases}
		\card\set{s_i \geq s_{i+1}} & \text{if } t_1 > s_j, \\
		\card\set{s_i \geq s_{i+1}} + 1 & \text{if } t_1 \leq s_j.
	\end{cases}
	\]
	Hence, the number of non-vanishing summands in $\sum_{q=1}^j\theta_q(0,s_1,\dots,s_j;t_1,\dots,t_k)$ is $j-Q$, which equals $\alpha(s_1,\dots,s_j,t_1)$.

	\vspace*{5pt}\noindent\textit{Second case}.
	It follows from the definition of $\theta_q$ that the number of non-vanishing summands in $\sum_{q=1}^j \theta_q(1,s_1,\dots,s_j;t_1,\dots,t_k)$ equals the number of non-vanishing summands in
	\[
	\sum_{q=1}^j \theta_q(0,s_1-1,\dots,s_j-1;t_1-1,\dots,t_k-1),
	\]
	which by the previous case equals $\alpha(s_1-1,\dots,s_j-1,t_1-1)$.
\end{proof}
	\sloppy
	\printbibliography
\end{document}